%%%%   STARTED BRIEFLY BUT WILL WAIT UNTIL ANOTHER TIME TO FINISH

\documentclass[10pt,reqno]{amsart}
\setlength{\hoffset}{-.5in}
\setlength{\voffset}{-.25in}
\usepackage{latexsym}
\usepackage{graphicx}
\usepackage{fancyhdr,amssymb}

\textwidth=4.5in
\textheight=7.0in

\newcommand{\R}{{\mathbb R}}

\newcommand{\C}{{\mathbb C}}

\theoremstyle{plain}
\numberwithin{equation}{section}
\newtheorem{thm}{Theorem}[section]
\newtheorem{theorem}[thm]{Theorem}
\newtheorem{lemma}[thm]{Lemma}
\newtheorem{definition}[thm]{Definition}
\newtheorem{proposition}[thm]{Proposition}

\pagestyle{fancy}

\begin{document}
%\fancyhead{}
%\renewcommand{\headrulewidth}{0pt}
%\fancyfoot{}
%\fancyfoot[LE,RO]{\medskip \thepage}
%\fancyfoot[LO]{\medskip MISSOURI J.~OF MATH.~SCI., }
%\fancyfoot[RE]{\medskip MISSOURI J.~OF MATH.~SCI., VOL.\ , NO.\ }

\setcounter{page}{1}

\title[OMEGA Integral]{Direct Proofs of the Fundamental Theorem of Calculus for the Omega Integral}
\keywords{omega integral, fundamental theorem of calculus}
\subjclass[2010]{26E35}
\author[C.~B.~Dawson]{C. Bryan Dawson}
\address{Department of Mathematics\\
                Union University\\
                1050 Union University Dr.\\
                Jackson, TN\\
                38305}
\email{bdawson@uu.edu}
%\thanks{Research supported in part by the Natural Sciences and Engineering Research Council of Canada and by Emperor Frederick II of Sicily.}
\author[M.~Dawson]{Matthew Dawson}
\address{CONACYT---CIMAT Unidad M\'{e}rida\\
Centro de Investigaci\'{o}n en Matem\'{a}ticas, A.C.\\
UNIDAD M\'{E}RIDA\\
Parque Cient\'{i}fico y Tecnol\'{o}gico de Yucat\'{a}n\\
Km 5.5  Carretera Sierra Papacal --- Chuburn\'{a} Puerto\\
Sierra Papacal; M\'{e}rida, Yucat\'{a}n. CP 97302}
\email{matthew.dawson@cimat.mx}

\begin{abstract}
When introduced in a 2018 article in the American Mathematical Monthly, the omega integral was shown to be an extension of the Riemann integral.  Although results for continuous functions such as the Fundamental Theorem of Calculus follow immediately, a much more satisfying approach would be to provide direct proofs not relying on the Riemann integral.  This note provides those proofs.
%Direct proofs of the Fundamental Theorem of Calculus are given for the omega integral.
\end{abstract}

\maketitle

\newcommand{\st}{\mathop{\rm st}}

\section{Introduction}

The omega integral, which makes use of the hyperreals to integrate real functions, was introduced in \cite{monthly} and proven to be an extension of the Riemann integral.  Theorems relating to continuous functions, such as integrability, additivity, and the fundamental theorem follow immediately. % Developing the omega integral on its own, with direct proofs rather than back-door proofs through the Riemann integral.  It is 
%A development of the omega integral on its own, one that does not go through the Riemann integral to 
It is natural to seek direct proofs of such theorems (that is, proofs that do not rely on the Riemann integral).
The purpose of this note is to provide these proofs.
%The purpose of this note is to provide direct proofs of those theorems, proofs that do not reference the Riemann integral.

We will make use of the following notational conventions from \cite{monthly}: 0 will not be considered to be an infinitesimal and $\Omega$ is often used to represent a positive unlimited hyperreal integer and $\omega:=1/\Omega$ for its infinitesimal reciprocal.  In addition, capital Greek letters generally represent unlimited hyperreals, and lowercase Greek letters generally represent infinitesimals.  
Otherwise, notation and basic facts about the hyperreals will follow \cite{goldblatt}, although we write the standard part operator $\st(\cdot)$ in place of the shadow. %For instance, the standard part operator is denoted by $\st: {}^*\R\rightarrow \R\bigcup\{-\infty,\infty\}$. 
All references to integrability in this article will refer to the omega integral.  For convenience we restate below the definition of the omega integral from \cite{monthly}, which is based on right-hand sums for equal-width partitions using an unlimited number of subintervals.

First, we remind the reader that any function $f:[a,b]\rightarrow \R$ defined on a subinterval of $\R$ admits a natural extension to a function ${}^*f:{}^*[a,b]\rightarrow {}^*\R$ defined on the hyperreal interval ${}^*[a,b] = \{x\in {}^*\R \mid a\leq x\leq b\}$ (see \cite{goldblatt}).  For convenience, we will refer to both the original function $f$ and its hyperreal extension ${}^*f$ by the same name $f$.  The omega integral is then defined as follows:

\begin{definition}[Omega integral] Let $f:[a,b]\rightarrow\R$ be a
function. If there exists $L\in \R%\bigcup\{-\infty,\infty\}
$ such that $\st\left(\sum_{k=1}^\Omega f(x_k)\Delta x\right)=L$ for every positive unlimited hyperreal integer $\Omega$ (where $\Delta x=\frac{b-a}{\Omega}$ and $x_k=a+k\Delta x$ for $k=1,2,\dots,\Omega$), then we write
$$\int_a^b f(x)\,dx =L$$
and call $f$ integrable.  If $\sum_{k=1}^\Omega f(x_k)\Delta x$ is positive infinite for every $\Omega$ we write $\int_a^b f(x)\,dx =\infty$, and for negative infinite sums we use $-\infty$. 
\end{definition}

%\thispagestyle{fancy}

%\vfil\eject
%\fancyhead{}
%\fancyhead[CO]{\hfill OMEGA INTEGRAL}
%\fancyhead[CE]{C.~B.~DAWSON AND M.~G.~DAWSON  \hfill}
%\renewcommand{\headrulewidth}{0pt}

We call $\sum_{k=1}^\Omega f(x_k)\Delta x$ an \emph{omega sum} irregardless of the name of the unlimited hyperreal integer used therein.

The following lemma will also be helpful.

\begin{lemma}\label{L1} Let $\Omega$ be a positive unlimited integer, let $M$ be a positive real number, and let $\gamma_k$ be 0 or infinitesimal for $k=1,\dots,\Omega$.  Then $\sum_{k=1}^\Omega \gamma_k M\frac{1}{\Omega}$ is 0 or infinitesimal.\end{lemma}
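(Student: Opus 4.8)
The plan is to bound the hyperfinite sum in absolute value by a single infinitesimal multiple of $M$, exploiting the fact that the largest of the $|\gamma_k|$ is itself 0 or infinitesimal. First I would set $\gamma:=\max_{1\le k\le\Omega}|\gamma_k|$ and argue that this maximum is genuinely \emph{attained}. Because $\sum_{k=1}^\Omega \gamma_k M\frac{1}{\Omega}$ is a hyperfinite sum, the sequence $(\gamma_k)_{k=1}^\Omega$ is internal; transferring the elementary statement that every nonempty finite set of reals has a greatest element then yields an index $k_0$ with $1\le k_0\le\Omega$ and $\gamma=|\gamma_{k_0}|$. By hypothesis $\gamma_{k_0}$ is 0 or infinitesimal, and hence so is $\gamma$.

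With this uniform bound in hand, the second step is the routine estimate
\[
\left|\sum_{k=1}^\Omega \gamma_k M\frac{1}{\Omega}\right|
\le \sum_{k=1}^\Omega |\gamma_k|\,M\frac{1}{\Omega}
\le \sum_{k=1}^\Omega \gamma\,M\frac{1}{\Omega}
= \gamma M,
\]
where the final equality uses $\sum_{k=1}^\Omega \frac{1}{\Omega}=1$, again a consequence of transfer. Since $\gamma$ is 0 or infinitesimal and $M$ is a fixed positive real, the product $\gamma M$ is 0 or infinitesimal; as the sum is squeezed in absolute value below $\gamma M$, it too must be 0 or infinitesimal, which finishes the argument.

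The hard part is the first step, not the arithmetic. One must resist the tempting but false slogan that ``a sum of infinitesimals is infinitesimal,'' which fails once the number of summands is unlimited. The whole proof turns on upgrading the \emph{pointwise} smallness of the $\gamma_k$ to the \emph{uniform} smallness of their maximum, and this is legitimate precisely because the maximum is attained at one concrete index $k_0$ (by internality and transfer) rather than merely approached. Once that uniform control is secured, collapsing the sum through $\sum_{k=1}^\Omega \frac{1}{\Omega}=1$ reduces everything to a single product of an infinitesimal with a real, and the conclusion follows immediately.
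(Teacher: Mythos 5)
Your proof is correct, but it takes a genuinely different route from the paper's. You upgrade the pointwise smallness of the $\gamma_k$ to a uniform bound by extracting $\gamma=\max_{1\le k\le\Omega}|\gamma_k|$, which requires invoking internality of the hyperfinite sequence and transferring the attainment of maxima; once $\gamma$ is known to be $0$ or infinitesimal, the estimate $\left|\sum_{k=1}^\Omega \gamma_k M\tfrac{1}{\Omega}\right|\le \gamma M$ finishes things. The paper avoids the maximum entirely: it fixes an arbitrary real $\varepsilon>0$, notes that $|\gamma_k|<\varepsilon/M$ for every $k$ simultaneously (since each $\gamma_k$ is $0$ or infinitesimal and $\varepsilon/M$ is a positive real), sums these inequalities to get $\left|\sum_{k=1}^\Omega \gamma_k M\tfrac{1}{\Omega}\right|<\varepsilon$, and concludes by arbitrariness of $\varepsilon$. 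The paper's argument is more elementary, needing only the definition of ``infinitesimal'' and the transferred monotonicity of hyperfinite sums, whereas yours leans on the internal-maximum principle. What your version buys is a single clean uniform bound $\gamma M$ and an explicit identification of where the danger lies (the false slogan that sums of infinitesimals are infinitesimal); what the paper's version buys is fewer nonstandard prerequisites. Note that both proofs tacitly assume the sequence $(\gamma_k)$ is internal --- the paper when it sums the inequalities over a hyperfinite index set, and you when you take the maximum --- so your explicit remark that internality is forced by the sum being well-defined is a welcome clarification rather than an extra hypothesis.
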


\begin{proof}
Let $\varepsilon>0$ be real. Since $\gamma_k$ is 0 or infinitesimal, $-\frac{\varepsilon}{M}<\gamma_k<\frac{\varepsilon}{M}$.  Hence
  $\sum_{k=1}^{\Omega}-\frac{\varepsilon}{M}<
\sum_{k=1}^{\Omega}\gamma_k<\sum_{k=1}^{\Omega}\frac{\varepsilon}{M}$, and multiplying by the positive constant $M\frac{1}{\Omega}$ gives
 $\sum_{k=1}^{\Omega}\frac{-\varepsilon}{\Omega}<   \sum_{k=1}^{\Omega}\gamma_k M\frac{1}{\Omega}<\sum_{k=1}^{\Omega}\frac{\varepsilon}{\Omega}=\frac{\varepsilon}{\Omega}\Omega=\varepsilon$.  Since $\varepsilon$ was arbitrary, $\sum_{k=1}^\Omega \gamma_k M\frac{1}{\Omega}$ is 0 or infinitesimal.
\end{proof}

\section{Integrability of Continuous Functions}

Intuitively, a partition using an unlimited number of subintervals only leaves room for an approximation to a continuous function to stray infinitesimally far from the value of the function on real numbers, and on a finite-length interval the resulting approximation is within an infinitesimal of the value we seek.  To prove that any two such sums have the same standard part, we show that the sum for a common refinement of the partitions is within an infinitesimal of each sum.

\begin{theorem}\label{A} Let $f$ be continuous on $[a,b]$.  Then $f$ is integrable.
\end{theorem}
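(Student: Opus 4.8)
The plan is to verify the two requirements implicit in the definition: that each omega sum $S_\Omega:=\sum_{k=1}^\Omega f(x_k)\Delta x$ is limited (so that $\st(S_\Omega)$ exists), and that this standard part is one and the same real number $L$ for every unlimited integer $\Omega$.

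First I would record two consequences of continuity on the compact interval $[a,b]$. By the Extreme Value Theorem $f$ is bounded, say $|f(x)|\le B$ for all real $x\in[a,b]$ and some real $B>0$; by transfer the same bound holds for all $x\in{}^*[a,b]$. Hence for any $\Omega$ we have $\left|\sum_{k=1}^\Omega f(x_k)\Delta x\right|\le B(b-a)$, a limited quantity, so each omega sum is limited and its standard part exists. Second, $f$ is uniformly continuous on $[a,b]$; writing out the $\varepsilon$--$\delta$ statement and applying transfer yields the microcontinuity property that $f(x)-f(y)$ is $0$ or infinitesimal whenever $x,y\in{}^*[a,b]$ satisfy $x-y\approx 0$. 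This uniform version, valid at every hyperreal point rather than only near standard ones, is the fact on which the whole argument rests.

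Next I would compare two omega sums through a common refinement, following the strategy announced before the statement. Given unlimited integers $\Omega_1$ and $\Omega_2$, set $N=\Omega_1\Omega_2$ and note that the equal-width partition into $N$ subintervals refines both, since its width $\Delta X=(b-a)/N$ satisfies $(b-a)/\Omega_1=\Omega_2\,\Delta X$. Grouping the $N$ fine sample points into the $\Omega_1$ coarse blocks and using $f(x_k)\,(b-a)/\Omega_1=\sum_{i=1}^{\Omega_2} f(x_k)\,\Delta X$, I would express the difference $S_{\Omega_1}-S_N$ as a single sum of $N$ terms of the form $\big(f(x_k)-f(X_j)\big)\Delta X$, where each fine point $X_j$ lies in the same coarse subinterval as $x_k$. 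Because that subinterval has infinitesimal width $(b-a)/\Omega_1$, each pair satisfies $x_k-X_j\approx 0$, so by microcontinuity each factor $f(x_k)-f(X_j)$ is $0$ or infinitesimal.

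The resulting sum is now exactly of the shape treated in Lemma \ref{L1} with $M=b-a$, so $S_{\Omega_1}-S_N$ is $0$ or infinitesimal; the identical computation gives that $S_{\Omega_2}-S_N$ is $0$ or infinitesimal. Therefore $S_{\Omega_1}$ and $S_{\Omega_2}$ differ by an infinitesimal and share a standard part, so $\st(S_\Omega)$ is independent of $\Omega$ and equals a single real number $L$, which proves integrability. The main obstacle is the second preliminary step: pointwise continuity does not suffice, because the sample points $x_k$ and $X_j$ of unlimited index are nonstandard and lie near no particular real point. Only the uniform continuity of $f$ on the compact interval $[a,b]$, transferred to a statement about all hyperreal points, furnishes a single infinitesimal control of $f(x_k)-f(X_j)$ across all of the unlimited-many blocks simultaneously.
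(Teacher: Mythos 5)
Your proposal is correct and follows essentially the same route as the paper: bound the omega sums via boundedness of $f$, compare two arbitrary omega sums through the common refinement $\Omega_1\Omega_2$, control the termwise differences $f(x_k)-f(X_j)$ by microcontinuity, and sum the errors with Lemma \ref{L1}. Your explicit appeal to uniform continuity plus transfer to justify microcontinuity at nonstandard sample points is a welcome elaboration of a step the paper handles only by citation, but it is not a different argument.
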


\begin{proof}  Let $f$ be continuous on $[a,b]$.  Then $f$ is bounded and each omega sum is limited.

Let $\Omega$ be a positive unlimited hyperreal integer and consider its omega sum: 
\begin{equation}\label{1}
\sum_{k=1}^\Omega f(x_k)\Delta x, \text{ where } \Delta x=(b-a)\cdot\frac{1}{\Omega} \text{ and }x_k=a+k(b-a)\cdot\frac{1}{\Omega}.
\end{equation}
Then for any positive unlimited hyperreal integer $B$, we form the omega sum based upon $B\Omega$ subintervals, calling the partition points $y_k$ to distinguish them from the partition points of the previous sum:
\begin{equation}\label{2}
\sum_{k=1}^{B\Omega}f(y_k)\Delta y,\text{ where } \Delta y=(b-a)\cdot\frac{1}{B\Omega} \text{ and } y_k=a+k(b-a)\cdot\frac{1}{B\Omega}.\end{equation}

We wish to show that the omega sums in \eqref{1} and \eqref{2} differ by at most an infinitesimal.  To that end, fix $n$ with $0\le n\le\Omega-1$.  If $1\le p\le B$, then 
%\begin{equation}
%\begin{split}\label{3}
\begin{align*}
x_n=y_{Bn}<y_{Bn+p} &=a+(Bn+p)(b-a)\frac{1}{B\Omega}\\
&=a+n(b-a)\frac{1}{\Omega}+p(b-a)\frac{1}{B\Omega}\\
&=x_n+(b-a)\frac{p}{B}\frac{1}{\Omega}\\
&\le x_n+(b-a)\frac{1}{\Omega}=x_{n+1}.\end{align*}%\end{split}\end{equation}
Since $x_n$ and $x_{n+1}$ differ by an infinitesimal and $f$ is continuous, 
%\begin{equation}
$$f(y_{Bn+p})=f(x_{n+1})+\gamma_{n,p}$$
%\end{equation}
for each such $n$ and $p$, where $\gamma_{n,p}$ is 0 or infinitesimal (see \cite{goldblatt}).  Then
%\begin{equation}\label{4}
%\begin{split}
\begin{align*}
\sum_{k=1}^{B\Omega}f(y_k)\Delta y&=\sum_{n=0}^{\Omega-1}\sum_{p=1}^B f(y_{Bn+p})(b-a)\frac{1}{B\Omega}\\
&=\sum_{n=0}^{\Omega-1}\sum_{p=1}^B\left(f(x_{n+1})+\gamma_{n,p}\right)(b-a)\frac{1}{B\Omega}\\
&=\sum_{n=0}^{\Omega-1}\sum_{p=1}^Bf(x_{n+1})(b-a)\frac{1}{B\Omega}+\sum_{n=0}^{\Omega-1}\sum_{p=1}^B\gamma_{n,p}(b-a)\frac{1}{B\Omega}\\
&=\sum_{n=0}^{\Omega-1}B\cdot f(x_{n+1})(b-a)\frac{1}{B\Omega}+\sum_{n=0}^{\Omega-1}\sum_{p=1}^B\gamma_{n,p}(b-a)\frac{1}{B\Omega}\\
&=\left(\sum_{n=1}^\Omega f(x_n)(b-a)\frac{1}{\Omega}\right)+\sum_{n=0}^{\Omega-1}\sum_{p=1}^B\gamma_{n,p}(b-a)\frac{1}{B\Omega}\\
&=\sum_{k=1}^\Omega f(x_n)\Delta x+\gamma
\end{align*}%\end{split}\end{equation}
where $\gamma$ is 0 or infinitesimal, the last step by lemma \ref{L1}.
%Since each $\gamma_{p,n}$ is infinitesimal (or 0) and $(b-a)\frac{1}{B\Omega}>0$, for any positive real number $\varepsilon$ we have
%\begin{equation}\label{5}
%\left|\sum_{k=1}^{B\Omega}f(y_k)\Delta y-\sum_{n=1}^\Omega f(x_n)\Delta x\right|<
%\sum_{n=0}^{\Omega-1}\sum_{p=1}^B\varepsilon(b-a)\frac{1}{B\Omega}
%=\varepsilon(b-a).\end{equation}
%Since $\varepsilon$ is arbitrary, the quantity
%\begin{equation}\label{6}
%\left|\sum_{k=1}^{B\Omega}f(y_k)\Delta y-\sum_{n=1}^\Omega f(x_n)\Delta x\right|
%\end{equation}
%must be infinitesimal, and the two omega sums have the same standard part $L$.

Now consider any two omega sums 
\begin{equation}\label{7}
\sum_{k=1}^{B_1}f(x_k)\Delta x\end{equation}
 and \begin{equation}\label{8}\sum_{k=1}^{B_2}f(y_k)\Delta y,\end{equation}
  where $B_1$ and $B_2$ are arbitrary positive unlimited integers.  By the previous work, the omega sum based on the common partition refinement for $B_1B_2$,
%\begin{equation}\label{9}
$$\sum_{k=1}^{B_1B_2}f(z_k)\Delta z,$$%\end{equation}
 has the same standard part as each of the omega sums in \eqref{7} and \eqref{8}.  Consequently all omega sums have the same standard part and $f$ is integrable.\end{proof}

\section{Additivity for Continuous Functions}

In \cite{monthly} it is shown that the omega integral is not additive in general, %; the example given is a nowhere continuous function.  
but through the Riemann integral we know that the omega integral is additive for continuous functions.
The difficulty presented in a direct proof using omega sums is that the partition points of an omega sum for the integral on the right in Theorem \ref{B} might not match the partition points for any omega sum for one or both integrals on the left, a situation that occurs whenever $\frac{c-a}{b-a}$ is irrational.  
In a similar manner to the proof of Theorem \ref{A}, as long as our partition points are infinitesimally close then the corresponding values of $f$ will be as well, allowing us to show the appropriate omega sums are close enough.

\begin{theorem}\label{B}
 If $f$ is continuous on $[a,c]$ and $a<b<c$ then
$$\int_a^b f(x)\,dx+\int_b^c f(x)\,dx=\int_a^c f(x)\,dx.$$
\end{theorem}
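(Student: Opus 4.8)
The plan is to fix a positive unlimited hyperreal integer $\Omega$, write down the single omega sum for $\int_a^c f$ with $\Omega$ equal subintervals, and split it at the partition point nearest to $b$. Since $f$ is continuous on each of $[a,b]$, $[b,c]$, and $[a,c]$, all three integrals exist by Theorem~\ref{A}; thus it suffices to show that the standard part of this one omega sum equals $\int_a^b f + \int_b^c f$.

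Concretely, with $z_k = a + k(c-a)\frac{1}{\Omega}$ and $\Delta z = (c-a)\frac{1}{\Omega}$, I let $m=\lfloor (b-a)\Omega/(c-a)\rfloor$ be the largest index with $z_m\le b$. By transfer $m$ is an internal hyperinteger, and since $(b-a)/(c-a)$ and $(c-b)/(c-a)$ are positive reals, both $m$ and $\Omega-m$ are unlimited; moreover $z_m\le b<z_{m+1}$ forces $|b-z_m|<\Delta z$, an infinitesimal. Splitting $\sum_{k=1}^\Omega f(z_k)\Delta z=S_1+S_2$ with $S_1=\sum_{k=1}^m f(z_k)\Delta z$ and $S_2=\sum_{k=m+1}^\Omega f(z_k)\Delta z$, I compare each piece against a genuine omega sum over the relevant real interval: $T_1=\sum_{k=1}^m f(w_k)\Delta w$ for $[a,b]$ (with $\Delta w=(b-a)/m$, $w_k=a+k\Delta w$) and, after reindexing by $j=k-m$, $T_2=\sum_{j=1}^{\Omega-m} f(v_j)\Delta v$ for $[b,c]$ (with $\Delta v=(c-b)/(\Omega-m)$, $v_j=b+j\Delta v$). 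Because $m$ and $\Omega-m$ are unlimited, Theorem~\ref{A} gives $\st(T_1)=\int_a^b f$ and $\st(T_2)=\int_b^c f$.

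The heart of the argument --- and the obstacle the authors flag, that $z_m$ is only infinitesimally close to $b$ rather than equal to it --- is to show $S_1-T_1$ and $S_2-T_2$ are each zero or infinitesimal. I would handle $S_1-T_1$ through the splitting
$$S_1-T_1=\sum_{k=1}^m\left(f(z_k)-f(w_k)\right)\Delta z+\sum_{k=1}^m f(w_k)\left(\Delta z-\Delta w\right).$$
A short computation gives $|w_k-z_k|=k|b-z_m|/m\le |b-z_m|$, an infinitesimal, so continuity of $f$ (via transfer) makes each $f(z_k)-f(w_k)$ zero or infinitesimal; the first sum is then zero or infinitesimal by Lemma~\ref{L1} (after padding the summand with zeros out to index $\Omega$, so that the factor $\Delta z=(c-a)\frac{1}{\Omega}$ matches the number of terms). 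For the second sum, $|\Delta z-\Delta w|=|b-z_m|/m$ and boundedness of $f$ by a real constant $K$ bound its absolute value by $mK|b-z_m|/m=K|b-z_m|$, again infinitesimal. The estimates for $S_2-T_2$ are identical in form, using $|v_j-z_{m+j}|=|b-z_m|\,(1-j/(\Omega-m))\le |b-z_m|$ and $|\Delta z-\Delta v|=|b-z_m|/(\Omega-m)$.

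Putting the pieces together, boundedness of $f$ makes every sum limited, so the standard part is additive, and
$$\st\left(\sum_{k=1}^\Omega f(z_k)\Delta z\right)=\st(S_1)+\st(S_2)=\st(T_1)+\st(T_2)=\int_a^b f+\int_b^c f.$$
Since this holds for every $\Omega$, the left-hand side equals $\int_a^c f$ and additivity follows. The step I expect to demand the most care is running the two distinct infinitesimal widths $\Delta z$ and $\Delta w$ (and $\Delta v$) simultaneously; the device that tames it is the separation into a ``values differ'' term, controlled by continuity together with Lemma~\ref{L1}, and a ``widths differ'' term, controlled by the real bound $K$.
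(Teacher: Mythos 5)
Your proposal is correct and follows essentially the same route as the paper: split the $[a,c]$ omega sum at the partition point nearest $b$, compare each half with a genuine omega sum for $[a,b]$ or $[b,c]$, and control each difference via a ``values differ'' term (continuity plus Lemma~\ref{L1}) and a ``widths differ'' term. The only deviation is that you bound the width-discrepancy term directly by $K|b-z_m|$ using a real bound $K$ on $f$, whereas the paper routes that estimate through the integrability of $|f|$; your version is, if anything, slightly more economical.
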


\begin{proof} Let $\Omega$ be a positive unlimited integer and form the associated omega sum for $f$ on $[a,c]$:
%\begin{equation}\label{11}
$$\sum_{k=1}^\Omega f(x_k)\Delta x, \text{ where } \Delta x=(c-a)\cdot\frac{1}{\Omega} \text{ and }x_k=a+k(c-a)\cdot\frac{1}{\Omega}.$$
%\end{equation}
Since $a<b<c$, there exists a positive unlimited integer $B<\Omega$ such that
\begin{equation}\label{12}
x_B=a+B\Delta x\le b<a+(B+1)\Delta x=x_{B+1}.\end{equation}
Using $B$ subintervals we may form an omega sum for $f$ on $[a,b]$:
%\begin{equation}\label{13}
$$\sum_{k=1}^B f(y_k)\Delta y, \text{ where } \Delta y=(b-a)\cdot\frac{1}{B} \text{ and }y_k=a+k(b-a)\cdot\frac{1}{B}.$$
%\end{equation}
Then although the partition points $x_k$ and $y_k$ may not be equal, they differ by at most an infinitesimal; in fact, for each $k\in\{1,2,\dots,B\}$,
%\begin{equation}\label{14}
$$\left|x_k-y_k\right|\le\left|x_B-y_B\right|=\left|x_B-b\right|<\Delta x,$$%\end{equation}
the last inequality stemming from \eqref{12}.
Then $|x_k-y_k|$ is at most infinitesimal, and since $f$ is continuous
$|f(x_k)-f(y_k)|$ is also at most infinitesimal.  Therefore
\begin{equation}\label{15}
\begin{split}
&\left|\sum_{k=1}^B f(y_k)\Delta y-\sum_{k=1}^B f(x_k)\Delta x\right|\\
&\qquad=\left|\sum_{k=1}^B f(y_k)\Delta y-\left(\sum_{k=1}^B f(x_k)\Delta y+\sum_{k=1}^Bf(x_k)(\Delta x-\Delta y)\right)\right|\\
\qquad&\le\left|\sum_{k=1}^B\left(f(y_k)-f(x_k)\right)\Delta y\right|+\left|\sum_{k=1}^B f(x_k)(\Delta x-\Delta y)\right|.\end{split}\end{equation}
The term on the left is infinitesimal by Lemma \ref{L1}. 
%Taking these terms separately, since $f(y_k)-f(x_k)$ is infinitesimal, so is its term; indeed, for any positive real number $\varepsilon$,
%\begin{equation}\label{16}
%\left|\sum_{k=1}^B\left(f(y_k)-f(x_k)\right)\Delta y\right|\le\sum_{k=1}^B \varepsilon(b-a)\frac{1}{B}=\varepsilon(b-a).\end{equation}
To see how $\Delta x$ and $\Delta  y$ compare, use the fact that $b=y_B=a+B\Delta y$ combined with \eqref{12},
%\begin{equation}\label{17}
$$a+B\Delta x\le a+B\Delta y<a+(B+1)\Delta x,$$%\end{equation}
to conclude that 
%\begin{equation}\label{18}
$$0\le\Delta y-\Delta x<\frac{\Delta x}{B}.$$%\end{equation}
Then the remaining term from \eqref{15} yields
%\begin{equation}\label{19}
%\begin{split}
\begin{align*}
\left|\sum_{k=1}^B f(x_k)(\Delta x-\Delta y)\right|
&=\left|\Delta x-\Delta y\right|\cdot\left|\sum_{k=1}^B f(x_k)\right|\\
&<\frac{\Delta x}{B}\left|\sum_{k=1}^B f(x_k)\right|\\
&=\frac 1B\left|\sum_{k=1}^B f(x_k)\Delta x\right|\\
&\le \frac1B\sum_{k=1}^B|f(x_k)|\Delta x\\
&\le\frac1B\sum_{k=1}^\Omega|f(x_k)|\Delta x\\
&=\frac 1B\left(\int_a^c |f(x)|\,dx+\text{infinitesimal}\right),%\end{split}
%\end{equation}
\end{align*}
since $|f|$ is continuous and hence integrable by Theorem \ref{A}.  Therefore the last quantity in the above inequalities is also at most infinitesimal, allowing the conclusion that
 %the quantity beginning equation \eqref{15}
\begin{equation}\label{20}
\left|\sum_{k=1}^B f(y_k)\Delta y-\sum_{k=1}^B f(x_k)\Delta x\right|
\end{equation}
is infinitesimal.

Now form the omega sum for $f$ on $[b,c]$ using $\Omega-B$ subintervals:
%\begin{equation}\label{21}
$$\sum_{k=1}^{\Omega-B} f(z_k)\Delta z, \text{ where } \Delta z=(c-b)\cdot\frac{1}{\Omega-B} \text{ and }z_k=b+k(c-b)\cdot\frac{1}{\Omega-B}.$$
%\end{equation}
A similar argument to the one above shows that
\begin{equation}\label{22}
\left|\sum_{k=1}^{\Omega-B}f(z_k)\Delta z-\sum_{k=B+1}^{\Omega}f(x_k)\Delta x\right|\end{equation}
is infinitesimal.
Combining \eqref{20} and \eqref{22},
%\begin{equation}\label{23}\begin{split}
\begin{align*}
\sum_{k=1}^\Omega f(x_k)\Delta x&=\sum_{k=1}^Bf(x_k)\Delta x+\sum_{k=B+1}^\Omega f(x_k)\Delta x\\
&=\sum_{k=1}^Bf(y_k)\Delta y+\gamma_1+\sum_{k=1}^{\Omega-B}f(z_k)\Delta z+\gamma_2\end{align*}%\end{split}\end{equation}
where $\gamma_1$ and $\gamma_2$ are 0 or infinitesimal.
But since $f$ is continuous, by Theorem \ref{A} each of these omega sums is within an infinitesimal of its associated integral and the definition of omega integral yields
%\begin{equation}\label{24}
$$\int_a^c f(x)\,dx=\int_a^b f(x)\,dx+\int_b^c f(x)\,dx.$$%\end{equation}
\end{proof}

\section{Fundamental Theorem of Calculus, Part I}

The usual proof of the fundamental theorem given in a calculus course still works for the omega integral.  There are, however, a few details to be careful about.

\begin{theorem}\label{C} If $f$ is continuous on $[a,b]$, then the function
$$F(x)=\int_a^x f(t)\,dt$$
is differentiable on $[a,b]$ and $F'(x)=f(x)$.  %In other words, $F$ is an antiderivative of $f$.
\end{theorem}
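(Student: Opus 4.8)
The plan is to prove the statement directly from the omega integral definition, mirroring the classical calculus argument while being careful about the hyperreal machinery. Fix $x \in [a,b]$ and consider the difference quotient $\frac{F(x+h) - F(x)}{h}$ for a real $h \neq 0$ with $x+h \in [a,b]$. By the additivity result just established in Theorem \ref{B}, I would write $F(x+h) - F(x) = \int_x^{x+h} f(t)\,dt$, reducing the problem to showing that $\frac{1}{h}\int_x^{x+h} f(t)\,dt \to f(x)$ as $h \to 0$. The goal is to bound $\left|\frac{1}{h}\int_x^{x+h} f(t)\,dt - f(x)\right|$ and show it tends to $0$.

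The key estimate is to rewrite $f(x) = \frac{1}{h}\int_x^{x+h} f(x)\,dt$ (treating $f(x)$ as a constant over the interval of integration), so that the quantity to control becomes $\frac{1}{h}\int_x^{x+h} \bigl(f(t) - f(x)\bigr)\,dt$. First I would invoke continuity of $f$ at the real point $x$: given real $\varepsilon > 0$, there is a real $\delta > 0$ such that $|f(t) - f(x)| < \varepsilon$ whenever $|t - x| < \delta$. Then for real $h$ with $0 < |h| < \delta$, every $t$ between $x$ and $x+h$ satisfies $|f(t) - f(x)| < \varepsilon$, and estimating the integral of the bounded integrand gives $\left|\frac{1}{h}\int_x^{x+h}\bigl(f(t)-f(x)\bigr)\,dt\right| \le \varepsilon$. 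This is exactly the classical $\varepsilon$--$\delta$ statement that the difference quotient converges to $f(x)$, which is what differentiability with $F'(x) = f(x)$ means.

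The detail warranting care is the integral-bounding step: I need that if $|g(t)| \le \varepsilon$ on an interval of length $|h|$ then $\left|\int g\right| \le \varepsilon |h|$. For the omega integral this should be verified at the level of omega sums rather than quoted from Riemann theory. Concretely, for any unlimited $\Omega$ the omega sum $\sum_{k=1}^{\Omega} g(t_k)\Delta t$ satisfies $\left|\sum_{k=1}^{\Omega} g(t_k)\Delta t\right| \le \sum_{k=1}^{\Omega} |g(t_k)|\Delta t \le \varepsilon |h|$ by the transfer of the real inequality $|g| \le \varepsilon$, and taking standard parts preserves the weak inequality; here I also use additivity (Theorem \ref{B}) and integrability (Theorem \ref{A}) to guarantee the integrals in question exist. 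The main obstacle I anticipate is the bookkeeping at the two interval endpoints: $f(x)$ should be viewed as a function that is constant in $t$ and hence trivially integrable with $\int_x^{x+h} f(x)\,dt = f(x)\,h$, and I must handle the cases $h > 0$ and $h < 0$ (where the orientation of integration flips) uniformly, as well as the one-sided derivatives at the endpoints $a$ and $b$. None of these is deep, but each must be stated explicitly so the proof genuinely avoids any appeal to the Riemann integral.
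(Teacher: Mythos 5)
Your proposal is correct, but it follows a genuinely different route from the paper's. The paper works with the nonstandard characterization of the derivative: it fixes a real $x$, takes an \emph{infinitesimal} increment $\alpha$, and shows $\st\bigl(\frac{F(x+\alpha)-F(x)}{\alpha}\bigr)=f(x)$. To do this it must first extend $F_c$ to the hyperreal domain ${}^*[a,b]$ and use the transfer principle twice --- once to push additivity $\int_a^x+\int_x^{x+\alpha}=\int_a^{x+\alpha}$ to infinitesimal $\alpha$, and once to push the comparison bound $m(x-c)\le F_c(x)\le M(x-c)$ to hyperreal endpoints --- after which continuity gives $f(x)-\varepsilon<f(t)<f(x)+\varepsilon$ on ${}^*[x,x+\alpha]$ and the squeeze finishes the argument. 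You instead keep the increment $h$ \emph{real} and prove the classical $\varepsilon$--$\delta$ limit statement, invoking Theorem \ref{B} only at real endpoints and verifying the needed monotonicity bound $\bigl|\int_x^{x+h}(f(t)-f(x))\,dt\bigr|\le\varepsilon|h|$ directly at the level of omega sums (via transfer of the pointwise bound to the hyperreal partition points, plus the exact identity $\int_x^{x+h}f(x)\,dt=f(x)h$ for the constant). What your route buys is that you never need to extend $F$ to hyperreal arguments or transfer integral identities to infinitesimal intervals; what it costs is the explicit bookkeeping you already flag (orientation for $h<0$, one-sided derivatives at $a$ and $b$, linearity of the omega integral for the subtraction step, and integrability of $t\mapsto f(t)-f(x)$ via Theorem \ref{A}), together with the tacit use of the equivalence between the classical and nonstandard definitions of the derivative. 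Both arguments are sound and neither appeals to the Riemann integral.
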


\begin{proof}
Let $f$ be continuous on $[a,b]$.  Then for any $c\in[a,b]$, by Theorem \ref{A} we may define
$F_c:[a,b]\to{\bold R}$ by
%\begin{equation}\label{31}
$$F_c(x)=\int_c^x f(t)\,dt.$$%\end{equation}
As with any real function, $F_c$ extends to the domain ${}^*[a,b]$ consisting of hyperreals between $a$ and $b$ inclusive.  We can then associate the integral notation with the extended function; that is, for any hyperreal $x\in{}^*[a,b]$,
%\begin{equation}\label{32}
$$\int_c^x f(t)\,dt=F_c(x).$$%\end{equation}
The transfer property allows us to extend the properties of integrals as well.
For instance, for each real $c\in[a,b]$, we write
%\begin{equation} \label{33}
$$\forall x\in[a,b] \left(m\le f(t)\le M\,\, \forall t\in[c,x]\right)\Longrightarrow m(x-c)\le F_c(x)\le M(x-c)$$%\end{equation}
transfers to
\begin{equation}\label{34}
\forall x\in{}^*[a,b] \left(m\le f(t)\le M\,\, \forall t\in{}^*[c,x]\right)\Longrightarrow m(x-c)\le F_c(x)\le M(x-c).\end{equation}
The additivity we will also need transfers.  Fixing $x\in[a,b]$ for each real $\alpha$ such that $x+\alpha\in[a,b]$, we have
\begin{equation}\label{35}
\int_a^x f(t)\,dt+\int_x^{x+\alpha}f(t)\,dt=\int_a^{x+\alpha}f(t)\,dt.\end{equation}
By transfer the same is true for any hyperreal $\alpha$ such that $x+\alpha\in{}^*[a,b]$.

Fix $x\in[a,b]$.
In order to prove that $F'(x)=f(x)$ we need to show that for any infinitesimal $\alpha$,
%\begin{equation}\label{36}
$$\st\left(\frac{F(x+\alpha)-F(x)}{\alpha}\right)=f(x).$$%\end{equation}
To that end, let $\alpha$ be an infinitesimal such that $x+\alpha\in {}^*[a,b]$.
Using the definition of $F$ and \eqref{35},
\begin{equation}\label{37}
\frac{F(x+\alpha)-F(x)}{\alpha}=\frac{\int_a^{x+\alpha}f(t)\,dt-\int_a^x f(t)\,dt}{\alpha}=\frac{\int_x^{x+\alpha}f(t)\,dt}{\alpha}.\end{equation}
Since $f$ is continuous, % on $[a,b]$,
 $|f(t)-f(x)|$ is infinitesimal for each $t$ between $x$ and $x+\alpha$.  Thus for each positive real $\varepsilon$,
%\begin{equation}\label{38}
$$f(x)-\varepsilon<f(t)<f(x)+\varepsilon$$%\end{equation}
for each such $t$, and by \eqref{34}
%\begin{equation}\label{39}
$$\left(f(x)-\varepsilon\right)\alpha\le\int_x^{x+\alpha}f(t)\,dt\le\left(f(x)+\varepsilon\right)\alpha.$$%\end{equation}
Dividing by $\alpha$ and combining with \eqref{37} shows that
%%\begin{equation}\label{40}
$$f(x)-\varepsilon<\frac{F(x+\alpha)-F(x)}{\alpha}<f(x)+\varepsilon,$$%\end{equation}
or
%\begin{equation}\label{41}
$$\left|\frac{F(x+\alpha)-F(x)}{\alpha}-f(x)\right|<\varepsilon.$$%\end{equation}
As $\varepsilon$ is arbitrary, $\left|\frac{F(x+\alpha)-F(x)}{\alpha}-f(x)\right|$ is infinitesimal for each $\alpha$ and we conclude that $F'(x)=f(x)$.
\end{proof}

Notice that throughout the proof the lower limits of integration are always real, so that there is no need to extend to functions $F_c$ for $c\in{}^*[a,b]$.

\section{Fundamental Theorem of Calculus, Part II}

The usual calculus-level proof of FTC~II from FTC~I applies.  However, just as with the Riemann integral, it is also possible to give a direct proof of FTC~II, one that does not require proving FTC~I first.  We 
begin with a lemma.
%present that proof next.

\begin{lemma}\label{L2}
If $H'=f$ is continuous on $[a,b]$, then for any hyperreal $x\in{}^*[a,b]$ and infinitesimal $\alpha$ for which $x+\alpha\in{}^*[a,b]$, one has
$$H'(x)=\frac{H(x+\alpha)-H(x)}{\alpha}+\gamma$$
 where $\gamma$ is 0 or infinitesimal.
\end{lemma}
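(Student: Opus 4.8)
The plan is to reduce the claim to the classical Mean Value Theorem for the real function $H$, transfer the resulting identity to the hyperreals, and then invoke uniform continuity of $f$ to control the location of the mean-value point. First I would record the real statement. Since $H'=f$ exists on $[a,b]$, the function $H$ is differentiable there, so for every $x\in[a,b]$ and every nonzero real $\alpha$ with $x+\alpha\in[a,b]$, the Mean Value Theorem furnishes a point $c$ between $x$ and $x+\alpha$ with $\frac{H(x+\alpha)-H(x)}{\alpha}=H'(c)=f(c)$. Phrasing this as a first-order sentence, with the clause ``$c$ lies between $x$ and $x+\alpha$'' written as $(x\le c\le x+\alpha)\vee(x+\alpha\le c\le x)$ and with $c\in[a,b]$, the transfer principle then yields, for the given hyperreal $x\in{}^*[a,b]$ and nonzero infinitesimal $\alpha$ with $x+\alpha\in{}^*[a,b]$, a hyperreal $c\in{}^*[a,b]$ between $x$ and $x+\alpha$ satisfying $\frac{H(x+\alpha)-H(x)}{\alpha}=f(c)$. (The exclusion $\alpha\ne0$ is automatic here, since $0$ is not counted as an infinitesimal, so the difference quotient is well defined.)

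Next I would observe that $c$ is infinitesimally close to $x$: because $c$ lies between $x$ and $x+\alpha$ we have $|c-x|\le|\alpha|$, which is infinitesimal. It then remains only to show $f(c)=f(x)+\delta$ with $\delta$ zero or infinitesimal, after which setting $\gamma:=f(x)-f(c)$ and rearranging gives $H'(x)=f(x)=f(c)+\gamma=\frac{H(x+\alpha)-H(x)}{\alpha}+\gamma$, exactly as claimed.

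The main obstacle is this last step, where the distinction between continuity and uniform continuity becomes essential. Since $x$ need not be a real number, mere pointwise continuity of $f$ (which guarantees microcontinuity only at standard points) does not suffice to conclude $f(c)\approx f(x)$. Instead I would use that a function continuous on the compact interval $[a,b]$ is uniformly continuous there; the nonstandard characterization of uniform continuity, obtained by transferring the usual $\varepsilon$-$\delta$ sentence, states precisely that $f(u)\approx f(v)$ whenever $u,v\in{}^*[a,b]$ with $u\approx v$. Applying this with $u=c$ and $v=x$ (both verified to lie in ${}^*[a,b]$, so the statement is applicable) gives $f(c)\approx f(x)$, completing the argument.
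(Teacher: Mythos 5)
Your proof follows the same route as the paper's: apply the Mean Value Theorem to $H$ over real subintervals, transfer to obtain a hyperreal mean-value point $c$ between $x$ and $x+\alpha$ (so $|c-x|\le|\alpha|$ is infinitesimal), and then use continuity of $f$ to conclude $f(c)\approx f(x)$ and set $\gamma=f(x)-f(c)$. If anything you are more careful than the paper at the last step: since $x$ may be a nonstandard point of ${}^*[a,b]$, concluding $f(c)\approx f(x)$ genuinely requires uniform continuity of $f$ on the compact interval $[a,b]$ (equivalently, microcontinuity at every point of ${}^*[a,b]$), a point the paper's proof passes over by invoking only ``continuity.''
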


\begin{proof}
Let $x,x_0\in{\mathbf R}$ such that both $x$ and $x+x_0$ are in $[a,b]$. Then the mean value theorem may be applied to $H$ since $H$ is differentiable on $[a,b]$.  Thus there is some $c\in{\mathbf R}$ such that $c$ is between $x$ and $x+x_0$ and
 $x_0H'(c)=H(x+x_0)-H(x)$.
Then by transfer,
for all $x,x_0\in{}^*{\mathbf R}$ such that $x,x+x_0\in {}^*[a,b]$, there exists $c\in{}^*{\mathbf R}$ such that $c$ is between $x$ and $x+x_0$ and  $x_0H'(c)=H(x+x_0)-H(x)$.

Let $x$ be a hyperreal in ${}^*[a,b]$ and let $\alpha$ be an infinitesimal such that $x+\alpha\in{}^*[a,b]$.  Then there is some hyperreal $c$ between $x$ and $x+\alpha$ such that $\alpha H'(c)=H(x+\alpha)-H(x)$.  But since $H'=f$ is continuous on $[a,b]$  and $c$ differs from $x$ by an infinitesimal, $H'(c)=H'(x)+\gamma$ where $\gamma$ is 0 or infinitesimal. %~\cite[theorem 7.7.1]{Goldblatt}.  
Hence
$H(x+\alpha)-H(x)=\alpha H'(c)=\alpha(H'(x)+H'(c)-H'(x))=\alpha H'(x)+\alpha\gamma$ and the conclusion follows.

\end{proof}

\begin{theorem}\label{D} If $f$ is continuous on $[a,b]$ and $H$ is any antiderivative of $f$ on $[a,b]$, then
$$\int_a^b f(x)\,dx=H(b)-H(a).$$
\end{theorem}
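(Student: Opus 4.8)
The plan is to prove Theorem~\ref{D} by a telescoping argument that converts an omega sum directly into $H(b)-H(a)$ plus an infinitesimal error, using Lemma~\ref{L2} as the bridge between the integrand values $f(x_k)$ appearing in the sum and the increments $H(x_k)-H(x_{k-1})$ of the antiderivative.

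First I would fix a positive unlimited integer $\Omega$ and write the associated omega sum $\sum_{k=1}^\Omega f(x_k)\Delta x$, where $\Delta x=(b-a)\frac{1}{\Omega}$ and $x_k=a+k(b-a)\frac{1}{\Omega}$, setting $x_0=a$ so that $x_\Omega=b$. Since $\Delta x$ is infinitesimal, for each fixed $k$ I would apply Lemma~\ref{L2} at the point $x=x_k$ with the infinitesimal increment $\alpha=-\Delta x$ (noting $x_k-\Delta x=x_{k-1}\in{}^*[a,b]$), obtaining
$$f(x_k)=H'(x_k)=\frac{H(x_k)-H(x_{k-1})}{\Delta x}+\gamma_k,$$
where each $\gamma_k$ is $0$ or infinitesimal. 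Multiplying by $\Delta x$ and summing, the increments telescope:
$$\sum_{k=1}^\Omega f(x_k)\Delta x=\sum_{k=1}^\Omega\bigl(H(x_k)-H(x_{k-1})\bigr)+\sum_{k=1}^\Omega\gamma_k(b-a)\frac{1}{\Omega}=\bigl(H(b)-H(a)\bigr)+\sum_{k=1}^\Omega\gamma_k(b-a)\frac{1}{\Omega}.$$

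Next I would dispatch the error term. Because each $\gamma_k$ is $0$ or infinitesimal and $b-a>0$, Lemma~\ref{L1} (with $M=b-a$) shows $\sum_{k=1}^\Omega\gamma_k(b-a)\frac{1}{\Omega}$ is $0$ or infinitesimal. Hence the omega sum differs from $H(b)-H(a)$ by at most an infinitesimal, so its standard part is exactly $H(b)-H(a)$. Since $\Omega$ was arbitrary and $f$ is integrable by Theorem~\ref{A}, every omega sum shares this standard part, and the definition of the omega integral yields $\int_a^b f(x)\,dx=H(b)-H(a)$.

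The step I expect to require the most care is the application of Lemma~\ref{L2}: one must use the \emph{negative} increment $\alpha=-\Delta x$ based at the \emph{right} endpoint $x_k$, rather than the more tempting forward increment at $x_{k-1}$, so that the integrand value produced matches the right-hand value $f(x_k)$ that actually appears in the omega sum, while the telescoping difference $H(x_k)-H(x_{k-1})$ is preserved. A secondary point worth stating explicitly is that the family $\gamma_1,\dots,\gamma_\Omega$ meets the hypothesis of Lemma~\ref{L1}, namely that every one of these quantities is $0$ or infinitesimal; this holds because for each fixed $k$ the quantity $x_k$ is a single hyperreal and $-\Delta x$ a single infinitesimal, so Lemma~\ref{L2} applies verbatim to each index and the accumulated error genuinely vanishes in the standard part.
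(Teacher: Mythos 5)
Your proof is correct and follows essentially the same route as the paper: apply Lemma~\ref{L2} to rewrite each $f(x_k)\Delta x$ as an increment of $H$ plus an infinitesimal error, telescope, and control the accumulated error with Lemma~\ref{L1}. The only difference is your choice of the backward increment $\alpha=-\Delta x$ based at $x_k$, which makes the telescoping run exactly from $H(a)$ to $H(b)$; the paper instead uses the forward increment $\alpha=(b-a)\omega$, and so must treat the $k=\Omega$ term separately and absorb an extra infinitesimal $H(a+(b-a)\omega)-H(a)$ via continuity of $H$ at $a$ --- your bookkeeping is slightly cleaner, and your closing appeal to Theorem~\ref{A} is unnecessary since you have already shown every omega sum has standard part $H(b)-H(a)$.
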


\begin{proof} We are given that $H'(x)=f(x)$ on $[a,b]$.  Given a positive unlimited integer $\Omega$, its associated omega sum is %we have $\Delta x=(b-a)\omega$, $x_k=a+k(b-a)\omega$, and
\begin{equation}\label{51}\sum_{k=1}^\Omega H'(x_k)\Delta x=
\sum_{k=1}^\Omega H'(a+k(b-a)\omega)\cdot (b-a)\omega,\end{equation}
writing $\omega$ for $\frac{1}{\Omega}$.

Then using Lemma \ref{L2} with 
 $x=a+k(b-a)\omega$ and $\alpha=(b-a)\omega$,
\begin{eqnarray*} &  & \sum_{k=1}^\Omega H'(a+k(b-a)\omega)\cdot (b-a)\omega\\
& = & \left(\sum_{k=1}^{\Omega-1}\left(\frac{H(a+k(b-a)\omega+(b-a)\omega)-H(a+k(b-a)\omega)}{(b-a)\omega}+\gamma_k\right)(b-a)\omega\right)\\
& & +H'(b)\cdot(b-a)\omega
\end{eqnarray*}
where $\gamma_k$ is 0 or infinitesimal for each $k$.  Writing $\beta$ for the $H'(b)(b-a)\omega$, which is 0 or infinitesimal, and simplifying yields the telescoping sum
\begin{eqnarray*}
& = & \beta+\sum_{k=1}^{\Omega-1}\left(H(a+(k+1)(b-a)\omega)-H(a+k(b-a)\omega)+\gamma_k(b-a)\omega\right)\\
& = & \beta + H(a+2(b-a)\omega)-H(a+1(b-a)\omega)+\gamma_1(b-a)\omega\\
&  & +H(a+3(b-a)\omega)-H(a+2(b-a)\omega)+\gamma_2(b-a)\omega\\
&  & +H(a+4(b-a)\omega)-H(a+3(b-a)\omega)+\gamma_3(b-a)\omega\\
& & +\dots\\
&  & +H(a+\Omega(b-a)\omega)-H(a+(\Omega-1)(b-a)\omega)+\gamma_{\Omega-1}(b-a)\omega\\
& = & \beta+H(a+\Omega(b-a)\omega)-H(a+1(b-a)\omega)+\sum_{k=1}^{\Omega-1}\gamma_k(b-a)\omega.
\end{eqnarray*}
By Lemma \ref{L1}, $\gamma=\sum_{k=1}^{\Omega-1}\gamma_k(b-a)\omega$ is 0 or infinitesimal.  Since $H$ is continuous at $a$ (it is differentiable there), $H(a+(b-a)\omega)=H(a)+\eta$ where $\eta$ is 0 or infinitesimal. %~\cite[theorem 7.1.3]{Goldblatt}.  
We then arrive at
$$\sum_{k=1}^\Omega H'(a+k(b-a)\omega)\cdot (b-a)\omega=H(b)-H(a)-\eta+\beta+\gamma,$$%\doteq H(b)-H(a).$$
the standard part of which is $H(b)-H(a)$.
Since $\Omega$ was arbitrary the proof is complete.
\end{proof}

%\medskip

%\noindent MSC2010: 26E35

%\medskip

%\noindent Key words and phrases: omega integral, fundamental theorem of calculus

\end{document}